\renewcommand*\subjclass[2][2000]{%
  \def\@subjclass{#2}%
  \@ifundefined{subjclassname@#1}{%
    \ClassWarning{\@classname}{Unknown edition (#1) of Mathematics
      Subject Classification; using '1991'.}%
  }{%
    \@xp\let\@xp\subjclassname\csname subjclassname@#1\endcsname
  }%
}
\newtheorem{theorem}{Theorem}[section]
\newtheorem{lemma}[theorem]{Lemma}
\newtheorem*{lemma*}{Lemma}
\newtheorem{proposition}[theorem]{Proposition}
\newtheorem{corollary}[theorem]{Corollary}
\def\1ton{1,2,\ldots,n}
\def\Div{{\rm Div}}
\def\d{\textnormal d}
\newcommand{\norm}{\,|\!|\,}
\newcommand{\bydef}{\stackrel{{\rm def}}{=\!\!=}}
\newcommand{\onto}{\xrightarrow[]{{}_{\!\!\textnormal{onto}\!\!}}}
\newcommand{\Tr}{\text{Tr}}
\newcommand{\A}{\mathbb{A}}
\newcommand{\R}{\mathbb{R}}
\newcommand{\X}{\mathbb{X}}
\newcommand{\Y}{\mathbb{Y}}
\newcommand{\W}{\mathscr{W}}
\theoremstyle{definition}
\newtheorem{conjecture}[theorem]{Conjecture}
\theoremstyle{remark}
\newtheorem{remark}[theorem]{Remark}
\numberwithin{equation}{section}
\def\XXint#1#2#3{{\setbox0=\hbox{$#1{#2#3}{\int}$}
\vcenter{\hbox{$#2#3$}}\kern-.5\wd0}}
\def\ge{\geqslant}
\begin{document}

\title{Harmonic maps between two concentric annuli in $\mathbf{R}^3$}  \subjclass{Primary 31A05;
Secondary 42B30 }


\keywords{Minimizers, Nitsche phenomenon, Annuli}
\author{David Kalaj}
\address{University of Montenegro, Faculty of Natural Sciences and
Mathematics, Cetinjski put b.b. 81000 Podgorica, Montenegro}
\email{davidk@ac.me}

\begin{abstract}
Given two annuli $\A(r,R)$ and $\A(r_\ast, R_\ast)$, in $\mathbf{R}^3$ equipped with the Euclidean metric and the weighted metric $|y|^{-2}$ respectively,   we minimize the Dirichlet integral, i.e. the functional $\mathscr{F}[f]=\int_{\A(r,R)}\frac{\norm Df\norm ^2} {|f|^2}$, where $f$ is a homeomorphism between $\A(r,R)$ and $\A(r_\ast,R_\ast)$, which belongs to the Sobolev class $\mathscr{W}^{1,2}$. The minimizer is a certain generalized radial mapping, i.e. a mapping of the form $f(|x|\eta)=\rho(|x|)T(\eta)$, where $T$ is a conformal mapping of the unit sphere onto itself and $\rho(t)={R_\ast} \left(\frac{r_\ast }{R_\ast }\right)^{\frac{R (r-t)}{(R-r) t}}.$ It should be noticed that in this case no Nitsche phenomenon occur.

\end{abstract}
\maketitle
\section{Introduction and statement of the main result}

The general law of hyperelasticity tells us that there exists an energy integral
$ E[h] = \int_\X
E(x, h, Dh) dx$  where $E : \mathbb{X} \times \mathbb{Y} \times \mathbf{R}^{n\times n}\to \mathbf{R}$ is a given stored-energy function characterizing
mechanical properties of the material. Here  $\X$ and $\Y$ are nonempty bounded domains in $\mathbf{R}^n
, n > 2.$
The mathematical models of nonlinear elasticity have been firstly studied by Antman \cite{[2]}, Ball \cite{[5]}, and Ciarlet \cite{[13]}. One of interesting and important problems in nonlinear
elasticity is whether the radially symmetric minimizers are indeed global
minimizers of the given physically reasonable energy. This leads us to study energy minimal
homeomorphisms $h: \A
\onto \A_\ast$ of Sobolev class $\mathscr{W}^{1,2}$ between annuli
$\A = \A(r, R) = \{x \in\mathbf{R}^n: r < |x| < R\}$ and $\A_\ast
=\A(r_\ast, R_\ast) = \{x \in\mathbf{R}^n: r_\ast < |x| < R_\ast\}$.
Here $0 \le  r < R$ and $0 \le  r_\ast < R_\ast$ are the inner and outer radii of $\A$ and
$\A_\ast$. The variational approach to Geometric Function Theory \cite{atm1,atm}  makes this problem more important. Indeed, several papers are
devoted to understand the expected radial symmetric properties see \cite{ko} and the references therein. Many times experimentally known
answers to practical problems has led us to deeper study of such mathematically
challenging problems.  We seek to minimize the 2-harmonic energy of mappings between two annuli in $\mathbf{R}^3$. We consider the modified Dirichlet energy $\mathscr{F}[f]=\int_{\A}\frac{\norm Dh\norm ^2}{|h|^2}$ and solve the problem of modified Dirichlet energy in the fourth section. The problem for  Dirichlet energy  $\mathscr{E}[f]=\int_{\A}\norm Dh\norm ^2$ is considered in the appendix below, but not solved completely. The research is related to the J. C. C. Nitsche conjecture \cite{Nitsche}. The conjecture has been solved by Kovalev, Iwaniec and Onninen in \cite{nconj} after some partial results by Lyzzaik \cite{Al}, Weitsman \cite{aw} and Kalaj \cite{Ka}. The conjecture raised a very important research in Geometric Function Theory connected to the nonlinear elasticity. See for example the papers \cite{atm}, \cite{memoirs} and \cite{klondon}.

In order to formulate the main result, let us define the generalized radial mappings.

We say that $f:\A\to \A_\ast$ is a generalized radial mapping, if there exists a conformal transformation $T$ of $\mathbb{S}$ onto itself, so that $f(x)=\rho(|x|)T\left(\frac{x}{|x|}\right)$. If $T$ is the identity, then we remove the prefix "generalized". For the representation of the class of conformal mappings of the sphere onto itself we refer to the books \cite{ah} and \cite{matti}.

The following is the main result of the paper

\begin{theorem}\label{krye}
Assume that $\mathcal{F}$ is the family a homeomorphisms between spherical rings $\A(r,R)$ and $\A(r_\ast, R_\ast)$ in $\mathbf{R}^3$ that belongs to $\mathscr{W}^{1,2}$. Then for the Dirichlet integral of $f\in\mathcal{F}$ with respect to the weight $\wp(w)=|w|^{-2}$, we have $$\mathscr{F}[f]=\int_{\A(r,R)}\frac{\norm Df\norm ^2}{|f|^2} dx\ge 4 \pi  \left(2(R-r)+\frac{r R \log \left[\frac{R_\ast}{r_\ast }\right]^2}{R-r}\right),$$ where $dx$ is the Lebesgue measure, and the infimum is achieved for the following generalized radial difeomorphisms between annuli $$f_1(x)={r_\ast} \left(\frac{r_\ast }{R_\ast }\right)^{\frac{R (r-|x|)}{(R-r) |x|}}T\left(\frac{x}{|x|}\right), \ \ \ f_2(x)={R_\ast} \left(\frac{r_\ast }{R_\ast }\right)^{\frac{R (|x|-r)}{(R-r) |x|}}T\left(\frac{x}{|x|}\right) .$$ The minimizer is unique up to a conformal change $T$ of $\mathbb{S}$.
\end{theorem}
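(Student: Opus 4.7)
The strategy is to split the integrand using the polar factorization $f = \phi\,\psi$ with $\phi(x) = |f(x)|$ and $\psi(x) = f(x)/|f(x)| \in \mathbb{S}^2$. Because $|\psi|\equiv 1$ one has $\psi\cdot\partial_i\psi = 0$, so $|\partial_i f|^2 = (\partial_i\phi)^2 + \phi^2|\partial_i\psi|^2$. Summing over $i$ and dividing by $|f|^2 = \phi^2$ yields the orthogonal identity
\[
\frac{\norm Df\norm^2}{|f|^2} \;=\; |\nabla\log|f||^2 \,+\, \norm D\psi\norm^2,
\]
so that $\mathscr{F}[f]$ breaks into a \emph{radial} and a \emph{spherical} nonnegative integral, to be estimated separately and then added.

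For the spherical term I pass to polar coordinates $x = t\eta$, $t\in(r,R)$, $\eta\in\mathbb{S}^2$, and set $\psi_t(\eta)=\psi(t\eta)$. The decomposition $\norm D\psi\norm^2 = |\partial_t\psi|^2 + t^{-2}|\nabla_{\mathbb{S}^2}\psi_t|^2$ gives
\[
\int_{\A(r,R)}\norm D\psi\norm^2\,dx \;=\; \int_r^R\!\!\int_{\mathbb{S}^2}\!\bigl(t^2|\partial_t\psi|^2 + |\nabla_{\mathbb{S}^2}\psi_t|^2\bigr)\,d\sigma(\eta)\,dt \;\ge\; \int_r^R\!\!\int_{\mathbb{S}^2}|\nabla_{\mathbb{S}^2}\psi_t|^2\,d\sigma\,dt.
\]
Since $f$ is a homeomorphism between the annuli, for a.e.\ $t$ the image $f(\{|x|=t\})$ is a topological sphere separating $0$ from infinity inside $\A(r_\ast,R_\ast)$, and so the radial projection $\psi_t\colon\mathbb{S}^2\to\mathbb{S}^2$ has degree $\pm 1$. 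The pointwise Hadamard-type inequality $|\nabla F|^2\ge 2|J_F|$ for maps between two-dimensional surfaces, combined with the topological identity $\int_{\mathbb{S}^2}J_{\psi_t}\,d\sigma = 4\pi\deg\psi_t$, then yields $\int_{\mathbb{S}^2}|\nabla_{\mathbb{S}^2}\psi_t|^2\,d\sigma\ge 8\pi$, with equality if and only if $\psi_t$ is a conformal (hence Möbius) self-map of $\mathbb{S}^2$. Consequently $\int_{\A}\norm D\psi\norm^2\,dx \ge 8\pi(R-r)$, and equality forces $\partial_t\psi\equiv 0$ together with conformality of each slice, that is $\psi(x)=T(x/|x|)$ for one conformal self-map $T$ of $\mathbb{S}^2$.

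For the radial term the inequality $|\nabla\log|f||^2 \ge |\partial_t\log|f||^2$ reduces the estimate, via Fubini, to a one-dimensional problem along each radial line. For a.e.\ $\eta$ the curve $t\mapsto f(t\eta)$ lies in $W^{1,2}((r,R);\mathbf{R}^3\!\setminus\!\{0\})$, is therefore continuous on $[r,R]$, and by the boundary behaviour of the homeomorphism its modulus tends to $r_\ast$ at one endpoint and to $R_\ast$ at the other. Writing $u(t)=\log|f(t\eta)|$, the Cauchy--Schwarz inequality with weight $t^{-2}$,
\[
\Bigl(\log\tfrac{R_\ast}{r_\ast}\Bigr)^2 = \Bigl(\int_r^R t\,u'(t)\cdot\tfrac1t\,dt\Bigr)^2 \;\le\; \Bigl(\int_r^R t^2(u')^2\,dt\Bigr)\Bigl(\int_r^R\tfrac{dt}{t^2}\Bigr) = \tfrac{R-r}{rR}\int_r^R t^2(u')^2\,dt,
\]
gives $\int_r^R t^2(u')^2\,dt \ge \frac{rR}{R-r}\bigl(\log\tfrac{R_\ast}{r_\ast}\bigr)^2$, with equality iff $u'(t) = C/t^2$ for the constant fixed by the boundary values. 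Integrating over $\eta$ yields $\int_{\A}|\nabla\log|f||^2\,dx \ge \frac{4\pi rR}{R-r}\bigl(\log\tfrac{R_\ast}{r_\ast}\bigr)^2$. Adding the two bounds produces the sharp inequality of the theorem, and the equality analysis forces $\rho(t)=e^{u(t)}$ to be the explicit solution of $u'(t)=C/t^2$ matching the chosen orientation of the boundary correspondence, which is precisely the function appearing in $f_1$ or $f_2$.

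The main technical hurdle is the topological-degree step for the spherical slices $\psi_t$: $W^{1,2}(\mathbb{S}^2,\mathbb{S}^2)$ is the borderline Sobolev space, and one must carefully give meaning to both the pointwise Hadamard bound $|\nabla F|^2\ge 2|J_F|$ and the analytic degree formula $\int_{\mathbb{S}^2} J_F\,d\sigma = 4\pi\deg F$ for the nonsmooth slice $\psi_t$, and verify that the radial projection of the topological sphere $f(\{|x|=t\})$ has degree $\pm 1$ for almost every $t$. The boundary extension required to pin down the endpoint values of $u$ is a secondary but nontrivial point that has to be handled using the homeomorphism property together with the $W^{1,2}$ regularity.
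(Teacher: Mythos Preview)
Your proposal is correct and follows the same overall architecture as the paper: the polar factorization $f=\phi\psi$, the orthogonal splitting of $\norm Df\norm^2/|f|^2$ into a radial and a spherical term, and the spherical lower bound via $|\nabla F|^2\ge 2|J_F|$ together with a degree/area argument are exactly what the paper does (its Lemma~3.2 and Lemma~4.1). The one genuine difference is how the one-dimensional radial problem is handled. The paper reduces to the functional
\[
\mathscr{H}[H]=4\pi\int_r^R\Bigl(\tfrac{t^2\dot H^2}{H^2}+2\Bigr)\,dt,
\]
writes down and solves its Euler--Lagrange equation $2H\dot H - t\dot H^2 + tH\ddot H=0$ (obtaining $H(t)=ae^{b/t}$), and then invokes convexity and coercivity via the direct method of the calculus of variations to conclude that this critical point is the minimizer. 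Your weighted Cauchy--Schwarz step
\[
\Bigl(\log\tfrac{R_\ast}{r_\ast}\Bigr)^2\le\Bigl(\int_r^R t^2(u')^2\,dt\Bigr)\Bigl(\int_r^R t^{-2}\,dt\Bigr)
\]
gives the same sharp constant in one line, and the equality condition $u'(t)=C/t^2$ recovers exactly the extremal $H(t)=ae^{b/t}$ without any ODE or compactness argument; this is a cleaner and more elementary substitute for that portion of the proof. A minor stylistic difference: the paper uses the inversion symmetry $\mathscr{F}[af/|f|^2]=\mathscr{F}[f]$ to normalize the boundary orientation, whereas you simply allow either endpoint assignment and rely on $(\log(R_\ast/r_\ast))^2=(\log(r_\ast/R_\ast))^2$; both are fine.
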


\begin{remark}
 If we denote the outer boundary of $\A$ by $\partial_\circ \A$ and consider the subfamily of homomorphisms $\mathcal{F}_\circ=\{f\in\mathcal{F}: f(x)=\frac{R_\ast}{R} x, \  \text{ for } x\in \partial_\circ \A$, then the minimizer is the mapping $h(x)=\rho(x)\frac{x}{|x|}$. See the paper by Koski and Onninen \cite{ko} where they make this constraint in order to prove that the minimizer is radial but for annuli on the plane, and $p$ energy. On the other hand when $R_\ast=r_\ast=1$, then the result says that the mappings $h(x)=T(x/|x|)$, of the unit sphere onto itself minimize the energy of mappings onto the unit sphere. This is an old problem solved by several authors ( see for example \cite{bcl}, \cite{jcb}, \cite{hmc}). Theorem~\ref{krye}, together with its Corollary~\ref{rrje} says that the case of degeneric annuli ($r=r_\ast=0$) is substantially different from the case of proper annuli concerning the Dirichlet energy. In the case of degeneric annuli, the minimal energy is zero (\cite{advc}).
\end{remark}

\section{Harmonic mappings and $p-$harmonic mappings}
In the following we define several classes of mappings which appear as the critical points of various energy integrals.
Assume that $h=\varrho^2$ is a positive smooth real function defined in the domain $\A_\ast$. Then it defines the Riemannian manifold $(\A_\ast, h)$. Assume that $\A$ is equipped with the Euclidean metric $g=1$ and let $f:(\A,g)\to(\A_*,g)$ be a $C^1$ map between manifolds. The energy density is defined \cite[Chapter~IX]{sy} by
$$e(f)=\mathrm{Tr}_g(f^*h)=\sum_{\alpha,\beta,i,j}^ng^{\alpha,\beta}(x)h_{ij}(f(x))\frac{\partial u^i}{\partial x^\alpha}\frac{\partial u^j}{\partial x^\beta}.$$

Thus $$e(u)=\varrho^2(f(x))\sum_{\alpha,\beta,i,j}^n\frac{\partial u^i}{\partial x^\alpha}\frac{\partial u^j}{\partial x^\beta}=\varrho^2(f(x))\norm Dh \norm ^2,$$
where $\norm \cdot \norm $ is the Gram-Schmidt norm defined by $\norm Dh \norm ^2 =\mathrm{Tr} (Dh^\ast Dh).$
Assume that $2\le p\le n $ and let $\wp\bydef \varrho^p$. The classical Dirichlet problem concerns the energy minimal mapping $h \colon \mathbb{A} \to \mathbf{R}^n$ of the Sobolev class $h\in h_\circ + \W^{1,n}_\circ (\mathbb{A}, \mathbf{R}^n)$ whose boundary values are explicitly prescribed by means of a given mapping $h_\circ \in  \W^{1,n} (\mathbb{A}, \mathbf{R}^n)$. More precisely we deal with the energy integral $$\mathscr{E}_{p}[h]=\mathscr{E}_{\rho,p}[h]\bydef \int_{\A}e(f)^{p/2}dx=\int_{\A}\wp(h(x))\norm Dh \norm ^p dx.$$

Let us consider the variation   $h \leadsto h\,+ \,\epsilon \eta $,  in which $\eta \in \mathscr C^\infty_\circ (\mathbb{A} , \R^n)$ and $\epsilon \to 0$, leads to the integral form of the $p$-harmonic system of equations
\begin{equation}\label{equa1}
\int_{\mathbb{A}} \left(\left<\nabla \rho, \eta\right>\norm Dh\norm ^p+\langle \wp(h) \norm Dh\norm^{p-2}Dh \right), \, D\eta \rangle =0, \quad \mbox{ for every } \eta \in \mathscr C^\infty_\circ (\mathbb{A} , \R^n).
\end{equation}
Equivalently
\begin{equation}\label{equa2}
\Delta_p h = \Div \big( \wp(h)\norm Dh \norm^{p-2}Dh\big)-\frac{1}{p}\norm Dh\norm ^p\nabla \wp=0,
\end{equation} in the  sense of distributions. The solutions to \eqref{equa2} are called $p-$harmonic mappings.

If $p=2$ the equation is called the harmonic equation, and the solutions are called the harmonic mappings.

Similarly as in in \cite{memoirs} (see also \cite{arxivk}), it can be derived the general $(\wp,p)$-harmonic equation which by using a different variation as the following.

The situation is different if  we allow $h$ to slip freely along the boundaries. The {\it inner variation} come to stage in this case. This is simply a change of the  variable; $h_\epsilon=h \circ \eta_\epsilon $, where  $\eta_\epsilon \colon \A \onto \A$ is a $\mathscr C^\infty$-smooth diffeomorphsm  of $\A$ onto itself, depending smoothly on a parameter $\epsilon \approx 0$ where  $\eta_\circ = id \colon \A \onto \A$. Let us take on the inner variation of the form
\begin{equation}\label{equa7}
\eta_\epsilon (x)= x + \epsilon \, \eta (x), \qquad \eta \in \mathscr C_\circ^\infty (\A, \R^n).
\end{equation}
By using the notation  $y=x+\epsilon \, \eta (x) \in \A$,
we obtain
$$\wp(h_\epsilon)Dh_\epsilon (x) = \wp(h(y)) Dh (y) (I+ \epsilon D\eta(x)).$$ Hence
\[
\begin{split}
\wp(h_\epsilon(x))\norm Dh_\epsilon(x)\norm^p & = \wp(h(y))\norm Dh(y)\norm^p
\\&+ p \epsilon\,  \wp(h(y))\langle \norm{Dh(y)}\norm ^{p-2} D^\ast h(y)\cdot  Dh(y)\, ,\,  D \eta \rangle + o(\epsilon).
\end{split}
\]
Integration with respect to $x\in \A$ we obtain
\[\begin{split}\mathscr E_\rho[h_\epsilon] &=\int_\A \wp(h_\epsilon(x))\norm Dh_\epsilon(x)\norm^p dx\\&= \int_\A \bigg[ \wp(h(y))\norm Dh(y) \norm^p \\& \ \ \ \ +  p \epsilon \wp(h(y))\langle \norm{Dh(y)}\norm ^{p-2} D^\ast h(y)\cdot  Dh(y)\, ,\,  D \eta(x) \rangle \bigg]\, \d x + o(\epsilon)\end{split} .\]
We now make the substitution $y=x + \epsilon \, \eta (x)$, which is a diffeomorphism for small $\epsilon$, for which we have:  $x= y- \epsilon \, \eta (y)+ o(\epsilon)$, $D\eta (x)= D\eta (y)+o(1)$, when $\epsilon\to 0$, and the change of volume element $\d x = [1-\epsilon \, \Tr \,D \eta (y) ]\, \d y + o(\epsilon) $. Further
$$\int_\A \wp(h(y))\norm Dh(y) \norm^p \d x=\int_\A \wp(h(y))\norm Dh(y) \norm^p [1-\epsilon \, \Tr \,D \eta (y) ]\, \d y + o(\epsilon)$$
 The so called equilibrium equation for the inner variation is obtained from $\frac{\d}{\d \epsilon} \mathscr E_p[{h_\epsilon}]\,=\,0\,$ at $\epsilon =0$,
\begin{equation}\label{intstar}\int_\A \langle \wp(h)\norm Dh \norm^{p-2} D^\ast h \cdot Dh - \frac{\wp(h)}{p} \norm Dh \norm^p I \, , \, D \eta \rangle \, \d y=0 \end{equation}
or, by using distributions
\begin{equation}\label{enhe}
\Div \left(\wp(h)\norm Dh \norm^{p-2} D^\ast h \cdot Dh - \frac{\wp(h)}{p} \norm Dh \norm^p I  \right)=0.
\end{equation}

By putting $$h(x)=H(t)\frac{x}{t}, \ \  t=|x|$$ we get $$Dh(x)=\frac{H(t)}{t}\mathrm{I}+\frac{tH'(t)-H(t)}{t}\cdot \frac{x\otimes x}{|x|^2}$$ and $$ \norm Dh \norm^{2}=\dot H(t)^2+(n-1) \frac{H(t)^2}{t^2}$$

Then we obtain $$D^\ast h \cdot Dh=\frac{H(t)^2}{t^2}\mathrm{I}+ \frac{t^2\dot H(t)^2-H(t)^2}{t^2} \frac{x\otimes x}{|x|^2}$$

We will focus on a particular problem, i.e. the case $n=3$, $p=n-1=2$ and $\wp(y)=|y|^{-2}$. So we consider the harmonic mappings between threedimensional Riemannian manifolds $(\A,g)$ and $(\A_\ast, h)$.

Then we have
\[\begin{split}\wp(h)\norm Dh \norm^{p-2} D^\ast h \cdot Dh &- \frac{\wp(h)}{p} \norm Dh \norm^p I=\wp(h)\left( D^\ast h \cdot Dh - \frac{1}{2} \norm Dh \norm^2 I\right)\\&=\left(-\frac{\dot H(t)^2}{2H^2(t)}\mathrm{I}+ \frac{t^2\dot H(t)^2-H(t)^2}{t^2H^2(t)} \frac{x\otimes x}{|x|^2}\right)\\&=(M(t)-t^{-2})\frac{x\otimes x}{|x|^2}-\frac{M(t)}{2}\mathrm{I},\end{split}\] where \begin{equation}\label{mh}M(t)=\frac{\dot H(t)^2}{H^2(t)}, \ \   t=|x|,  \ \  x=(x_1,x_2,x_3).\end{equation}

Now \eqref{enhe}  reduces to the differential equation $$ \left(\frac{2 M(t)}{t}+\frac{M'(t)}{2 }\right) \frac{x}{|x|}= 0.$$
By having in the mind the substitution \eqref{mh} we obtain the following equation

\begin{equation}\label{bubi}\left(\frac{2 H(t) \dot H(t)- t \dot H(t)^2+t H(t) \ddot H(t)}{t^2 H(t)}\right)x\equiv 0.\end{equation}

In order to consider the equation
\eqref{equa2} for the case $n-1=2=p$, we first have $$\Div \left( \frac{Dh}{|h|^2}\right)=\frac{1}{2}\norm Dh\norm ^2\nabla \rho=-\frac{\norm Dh\norm ^2}{|h|^4}h.$$

Then \begin{equation}\label{hh}\Delta h = \frac{2}{|h|^2} \sum_{j=1}^3\sum_{k=1}^3 D_k h_j \left<h, D_k h\right>e_j-\frac{\norm Dh\norm ^2}{|h|^2}h.\end{equation}

Put in the previous equation $h(x)=H(t)\frac{x}{|x|}$, where $t=|x|$. Then we have

\begin{equation}\label{delta}\Delta h=\frac{-2 H(t)+2 t H'(t)+t^2 H''(t)}{t^3} x$$ and $$\norm Dh\norm ^2=\frac{2 H(t)^2}{t^2}+H'(t)^2\end{equation} and

$$ \frac{2}{|h|^2} \sum_{j=1}^3\sum_{k=1}^3 D_k h_j \left<h, D_k h\right>e_j=\frac{2}{H(t)^2}\frac{ H(t) H'(t)^2}{t}x$$
By plugging the previous three quantities in \eqref{hh} we get again \eqref{bubi}.

It follows from our main result that if instead of $h(x)=H(t)\frac{x}{|x|}$, we  put the following constraint $h(x)=H(t)T\left(\frac{x}{|x|}\right)$ in \eqref{hh} we again arrive to the following equation
 \begin{equation}\label{bubi1}\left(\frac{2 H(t) \dot H(t)- t \dot H(t)^2+t H(t) \ddot H(t)}{t^2 H(t)}\right)T\left(\frac{x}{|x|}\right)\equiv 0,\end{equation} which is equivalent to \eqref{bubi}. We will solve those equations later.

It is easily seen that one of the solution of \eqref{bubi} is induced by the function $H(t)=1$, namely the mapping $h(x)=\frac{x}{|x|}$. This mapping is harmonic and solves both equations \eqref{equa2} and \eqref{enhe} but it is not a diffeomorphsim. This makes a substantial difference between the corresponding equations in \cite{memoirs}, where the authors Iwaniec and  Onninen shown that the mapping $f(x)=\frac{x}{|x|}$ is generalized $n-$harmonic but it is not $n-$harmonic.

\section{Some preliminary results}
For a mapping $f\in \mathcal{F}(\A,\A_\ast)$ we put
$$f=\rho(x) S(x), \ \   |S(x)|=1.$$
Then
$$Df(x)=\nabla \rho(x) \otimes S(x)+\rho DS(x).$$
So for any vector $k$ we have $$Df(x)k =\left<\nabla \rho (x) , k \right> S(x)+\rho DS(x) k .$$
It follows that $$|Df(x)k|^2 =\left<\nabla \rho (x) , k \right>^2+\rho^2 | DS(x) k|^2+2\left<\nabla \rho (x) , k \right> \left<S(x), DS(x)e_i\right>.$$

Since $|S(x)|^2=1$, we have $\left<S(x), DS(x)k\right>=0$. Thus

\begin{equation}\label{alp}|Df(x)k|^2 =\left<\nabla \rho (x) , k \right>^2+\rho^2 |DS(x) k|^2.\end{equation}
So summing for $k=e_i$, and  $i=1,\dots,n$ we get
\begin{equation}\label{normq}\norm Df(x)\norm ^2=|\nabla\rho(x)|^2+\rho^2\norm DS\norm ^2.\end{equation}

Let $f$ be a function between $A$ and $B$. By $N(y,f)$ we denote
the cardinal number of $f^{-1}({y})$ if the last set is finite and
we set $N(y,f)=+\infty$ in the other case. The function $y\to
N(y,f)$ is defined on $B$. If $f$ is surjective then $N(y,f)\ge 1$
for every $y\in B$. The following proposition hold.
\begin{proposition}\label{jakobi}\cite{RR}
Let $U$ be an open subset of $\mathbf{ R}^n$ and let $f:U\to \mathbf{ R}^n$
be $C^1$ mapping. Then the function $y\to N(y,f)$ is measurable on
$\mathbf{ R}^n$ and
\begin{equation}\label{jak} \int_{\mathbf{R}^n}N(y,f)\,\mathrm{d}y=\int_U|J(x,f)|\,\mathrm{d}x,\end{equation}
where $J(x,f)$ is the Jacobian of $f$.
\end{proposition}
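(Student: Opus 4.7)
The strategy is to decompose $f$ via its modulus and direction, split the integrand into a radial part and a spherical part, then bound each separately---the radial part by an explicit one-dimensional Dirichlet minimization, the spherical part by the area formula (Proposition~\ref{jakobi}). Writing $f(x)=\rho(x)S(x)$ with $\rho=|f|$ and $|S|\equiv 1$, identity~\eqref{normq} divided by $|f|^2=\rho^2$ gives
$$\frac{\norm Df\norm^2}{|f|^2}=\frac{|\nabla\rho|^2}{\rho^2}+\norm DS\norm^2=|\nabla u|^2+\norm DS\norm^2,\qquad u\bydef\log\rho,$$
so that $\mathscr F[f]=I_1+I_2$ with $I_1=\int_{\A}|\nabla u|^2\,dx$ and $I_2=\int_{\A}\norm DS\norm^2\,dx$, and the two can be treated independently.

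For $I_1$: because $f\in\mathcal F$ is a homeomorphism between the annuli in $\W^{1,2}$, the scalar $u=\log|f|$ has boundary traces $\log r_\ast$ and $\log R_\ast$ on the two spheres $\{|x|=r\}$ and $\{|x|=R\}$, in some order (depending on whether $f$ swaps the boundary components). Among all Sobolev functions with these Dirichlet data, the Dirichlet energy is minimized by the radial harmonic extension $u(t)=a+b/t$; solving the two-point boundary value problem and evaluating the one-dimensional integral yields
$$I_1 \;\ge\; \frac{4\pi\,rR\,\log^{2}(R_\ast/r_\ast)}{R-r}.$$

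For $I_2$: parametrize $\A$ by $x=t\eta$, $t\in(r,R)$, $\eta\in\mathbb{S}^2$, split the derivative of $S$ into its radial and tangential components, and use $dx=t^2\,dt\,d\sigma_\omega$ together with discarding the nonnegative radial term:
$$I_2 = \int_r^R\!\!\int_{\mathbb{S}^2}\!\bigl[t^{2}|\partial_t S|^{2}+\norm D_\eta S_t\norm^{2}\bigr]\,d\sigma_\omega\,dt \;\ge\; \int_r^R\!\!\int_{\mathbb{S}^2}\!\norm D_\eta S_t\norm^{2}\,d\sigma_\omega\,dt,$$
where $S_t(\eta)=S(t\eta)\colon\mathbb{S}^2\to\mathbb{S}^2$. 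Because $f\colon\A\onto\A_\ast$ is a homeomorphism, $f(\{|x|=t\})$ is a topological $2$-sphere separating the two boundary components of $\A_\ast$, hence pierced by every radial ray from the origin, so $N(\zeta,S_t)\ge 1$ for a.e.\ $\zeta\in\mathbb{S}^2$. Combining the pointwise two-dimensional inequality $\norm D\phi\norm^{2}\ge 2J(\phi)$ with Proposition~\ref{jakobi} applied on $\mathbb{S}^2$ gives
$$\int_{\mathbb{S}^2}\norm D_\eta S_t\norm^{2}\,d\sigma_\omega \;\ge\; 2\int_{\mathbb{S}^2} N(\zeta,S_t)\,d\sigma_\omega(\zeta) \;\ge\; 8\pi,$$
so $I_2\ge 8\pi(R-r)$. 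Summing the two lower bounds reproduces exactly the constant $4\pi\bigl[2(R-r)+rR\log^{2}(R_\ast/r_\ast)/(R-r)\bigr]$ in the theorem.

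Equality analysis forces $u$ to be the explicit radial harmonic profile above---producing $\rho(|x|)$ as in $f_1$ or $f_2$ according to the two possible orderings of the boundary traces---together with $\partial_t S\equiv 0$ (so $S$ descends to a map $T$ independent of $t$) and pointwise saturation of $\norm DT\norm^{2}=2J(T)$, forcing $T$ to be a conformal self-map of $\mathbb{S}^2$; this is precisely the generalized radial form stated. I anticipate the main technical obstacle to be the slice-wise application of Proposition~\ref{jakobi} to $S_t$: one must justify via Fubini that for a.e.\ $t\in(r,R)$ the restriction $f|_{\{|x|=t\}}$ lies in $\W^{1,2}(\mathbb{S}^2,\mathbf{R}^3\setminus\{0\})$ (hence $S_t\in\W^{1,2}(\mathbb{S}^2,\mathbb{S}^2)$), that the area formula applies to these slices, and that the topological covering property $N(\zeta,S_t)\ge 1$ descends to almost every slice---all of which requires care because the Sobolev homeomorphism $f$ need not be $C^1$.
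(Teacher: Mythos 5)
Your proposal does not address the statement in question. The statement is Proposition~\ref{jakobi}, the area formula for $C^1$ maps: that the multiplicity function $y\mapsto N(y,f)$ is measurable and that $\int_{\mathbf{R}^n}N(y,f)\,\mathrm{d}y=\int_U|J(x,f)|\,\mathrm{d}x$. What you have written is instead a proof sketch of the paper's main result, Theorem~\ref{krye} (the lower bound for $\mathscr{F}[f]$ and the identification of the generalized radial minimizers). Worse, as a purported proof of the proposition your argument is circular: you explicitly invoke ``the area formula (Proposition~\ref{jakobi})'' as a tool in your spherical-part estimate, so the statement to be proved appears as a hypothesis of your own argument. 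Nothing in the proposal touches the actual content of the proposition --- neither the measurability of the Banach indicatrix $N(\cdot,f)$ nor the equality of the two integrals. Note also that the paper itself does not prove this proposition; it is quoted from Rado--Reichelderfer \cite{RR} as a classical fact, so there is no ``paper proof'' for your argument to parallel, and a blind attempt at this statement needed to be a self-contained proof of the area formula.

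For the record, a genuine proof runs along standard lines: split $U$ into the regular set $\{J(x,f)\neq 0\}$ and the critical set $\{J(x,f)=0\}$. On the regular set, the inverse function theorem yields a countable cover by open sets on which $f$ is injective; the classical change-of-variables formula applies on each piece, and summing via monotone convergence gives $\int N=\int|J|$ over that set. On the critical set, Sard's theorem shows the image has Lebesgue measure zero, so both sides contribute nothing there (with $N$ counted relative to the regular part). Measurability of $y\mapsto N(y,f)$ follows, e.g., by writing $N(y,f)=\lim_{k\to\infty}\sum_{Q\in\mathcal{D}_k}\chi_{f(Q\cap U)}(y)$ over dyadic decompositions $\mathcal{D}_k$, using that continuous images of Borel sets are analytic and hence Lebesgue measurable. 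Your sketch of Theorem~\ref{krye}, taken on its own terms, is broadly aligned with the paper's strategy (logarithmic radial energy plus a slice-wise spherical bound), but that is not the task that was posed here.
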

Further, let $h$ be a $C^1$ surjection from an $n-1$ dimensional
rectangle $K^{n-1}$ onto the unit sphere $\mathbb{S}^{n-1}$. Let the
function $f$ be defined in the $n$ dimensional rectangle
$K^n=[0,1]\times K^{n-1}$ by $f(t,u)=rh(u)$. Thus $f$ is a $C^1$
surjection from $K^n$ onto the unit ball $\mathbb{B}^n$. It is easy to
obtain the formula $J(x,f)=t^{n-1}D_h(u)$, where $x=(t,u)\in
K^{n}$, and $D_h$ denotes the norm of the vector product
$$D_h=\left| \frac{\partial h}{\partial
x_1}\times\dots\times\frac{\partial h}{\partial
x_{n-1}}\right|.$$

According to Proposition~\ref{jakobi} it follows that
\begin{equation*}
\begin{split}
\dfrac{1}{n}\omega_{n-1}&=\mu (\mathbb{B}^n)=\int_{\mathbb{B}^n}\,\mathrm{d}y\le
\int_{\mathbb{B}^n}N(y,f)\,\mathrm{d}y\\&=\int_{K^n}|J(x,f)|\,\mathrm{d}x=
\int_0^1t^{n-1}\,\mathrm{d}t\int_{K^{n-1}}D_h(u)du=\dfrac{1}n\int_{K^{n-1}}D_h(u)du.
\end{split}
\end{equation*}
Consequently we have \begin{equation}\label{gram}
\int_{K^{n-1}}D_h(u)du\ge \omega_{n-1}.\end{equation}
Let $x\in \A(r,R)$ and define $N=\frac{x}{|x|}$. Then consider the following system of mutually orthogonal vectors $(U_1,\dots, U_{n-1},N)$ of the unit norm. The vectors $(U_1,\dots, U_{n-1})$ are arbitrarily chosen. Then we define the Gram determinant of $S$ at $x$ by $$D_S(x)=|D_{U_1}S(x)\times \dots\times D_{U_{n-1}}S(x)|.$$
Now we have the following refined version of \cite[Proposition~1.6]{Ka}.
\begin{lemma}\label{onep1}
Let $f$ be a $C^1$ surjection between the spherical rings
$\A(r,R)$ and $\A(r_\ast,R_\ast)$, and let $S={f}/{|f|}$. Let
$\mathbb{S}(t)$ be a sphere of radius $t$ centered at the origin. Then
\begin{equation}\label{inequ}\int_{\mathbb{S}(t)}D_S(x)d\sigma(x)\geq
\omega_{n-1}.
\end{equation} where $\omega_{n-1}$ denote the measure of $\mathbb{S}$.
\end{lemma}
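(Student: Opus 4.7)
The plan is to reduce the inequality to the area formula \eqref{jak} by ``coning off'' the sphere map $S\vert_{\mathbb{S}(t)}$ to a surjection of a solid piece of $\mathbb{R}^n$ onto the unit ball, exactly as in the computation preceding \eqref{gram}. Fix $t\in(r,R)$ and choose a $C^1$ surjection $\Phi:K^{n-1}\onto \mathbb{S}(t)$ with Gram (area) Jacobian $J_\Phi(u)$ so that $d\sigma=J_\Phi(u)du$. Define
$$G:K^n=[0,1]\times K^{n-1}\to \overline{\mathbb{B}^n},\qquad G(s,u)=s\,S(\Phi(u)).$$
In an orthonormal frame adapted to the splitting into the radial direction $N=x/|x|$ and the tangent directions $U_1,\dots,U_{n-1}$, a direct computation (identical to the one yielding $J(x,f)=t^{n-1}D_h(u)$ in the text) gives $|J(s,u;G)|=s^{n-1}J_\Phi(u)\,D_S(\Phi(u))$.

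Applying Proposition~\ref{jakobi} to $G$ and then the change of variables $x=\Phi(u)$ on $\mathbb{S}(t)$,
$$\int_{\mathbb{B}^n}N(y,G)\,dy=\int_0^1\!\!\int_{K^{n-1}}s^{n-1}J_\Phi(u)D_S(\Phi(u))\,du\,ds=\frac{1}{n}\int_{\mathbb{S}(t)}D_S(x)\,d\sigma(x).$$
Hence the lemma is equivalent to the statement that $N(y,G)\ge1$ for a.e.\ $y\in\mathbb{B}^n$, which amounts to the claim that the restricted map $S\vert_{\mathbb{S}(t)}:\mathbb{S}(t)\to \mathbb{S}$ is surjective onto the unit sphere. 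Indeed, if that holds, then $G$ covers $\mathbb{B}^n$ (each $y=|y|\zeta$ with $\zeta\in\mathbb{S}$ is hit by $s=|y|$ and any $x\in \mathbb{S}(t)$ with $S(x)=\zeta$), so $\int_{\mathbb{B}^n}N(y,G)\,dy\ge\mu(\mathbb{B}^n)=\omega_{n-1}/n$, which gives exactly \eqref{inequ}.

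It remains to prove the surjectivity of $S\vert_{\mathbb{S}(t)}$, and this is the main obstacle---it is the only place where the surjection hypothesis on $f$ enters. Given $\zeta\in \mathbb{S}$, the radial segment $\gamma_\zeta=\{s\zeta:r_\ast\le s\le R_\ast\}\subset \overline{\A_\ast}$ joins the two boundary spheres of $\A_\ast$. Because $f$ is a $C^1$ surjection $\A(r,R)\onto \A(r_\ast,R_\ast)$ extending continuously to the closure (with the inner and outer boundary spheres mapped to the two components of $\partial \A_\ast$ in some order), the closed preimage $f^{-1}(\gamma_\zeta)$ contains a continuum meeting both $\mathbb{S}(r)$ and $\mathbb{S}(R)$; by the intermediate value theorem applied to the continuous function $x\mapsto |x|$ along such a continuum, there is a point $x_0$ with $|x_0|=t$ and $f(x_0)\in\gamma_\zeta$, so $S(x_0)=\zeta$. (Equivalently, one can argue via the topological degree: $\deg(S\vert_{\mathbb{S}(t)})$ is constant in $t\in(r,R)$ by homotopy and equal to $\pm1$ near the boundary, so the image is all of $\mathbb{S}$.) Combining this with the computation above yields \eqref{inequ}.
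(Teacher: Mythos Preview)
Your approach is essentially the paper's: both reduce \eqref{inequ} to the area formula by coning off the sphere map $S\vert_{\mathbb{S}(t)}$ to a map onto $\mathbb{B}^n$. The paper simply composes $S$ with a parametrization $g:K^{n-1}\to\mathbb{S}(t)$ and invokes the already-established inequality \eqref{gram} for the surjection $h=S\circ g$, whereas you re-derive \eqref{gram} in situ via your map $G(s,u)=s\,S(\Phi(u))$; the computations are the same.

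The one substantive difference is that the paper merely \emph{asserts} that $S\circ g:K^{n-1}\to\mathbb{S}$ is onto, while you try to prove it. Be aware that your justification imports hypotheses not present in the lemma as stated: a bare $C^1$ surjection of open annuli need not extend continuously to the closure, need not send boundary spheres to boundary spheres, and $f^{-1}(\gamma_\zeta)$ need not contain a continuum joining $\mathbb{S}(r)$ to $\mathbb{S}(R)$. Your degree argument likewise needs a boundary normalization to pin down $\deg(S\vert_{\mathbb{S}(t)})=\pm1$. These extra assumptions are, however, automatically satisfied in the only place the lemma is used (Theorem~\ref{krye}, where $f$ is a homeomorphism), so your proof is no less complete than the paper's on this point---you have simply made the hidden hypothesis visible.
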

\begin{proof}
Let $K^{n-1}$ be an $n-1$-dimensional rectangle and let
$g:K^{n-1}\to P^{n-1}$ be the spherical coordinates of $\mathbb{S}(t)$. Then the
function $S \circ g$ is a differentiable surjection from $K^{n-1}$
onto the unit sphere $\mathbb{S}$. Then by (\ref{gram}) we have $$
\int_{K^{n-1}}D_{S\circ g}dK\geq \omega_{n-1}.$$ Further we obtain $${D_{S\circ
g}(x)}={\left|S'(g(x))\frac{\partial g(x)}{\partial x_1}\times
\dots \times S'(g(x))\frac{\partial g(x)}{\partial
x_{n-1}}\right|}.$$ Hence we obtain
$$\omega_{n-1}\leq
\int_{K^{n-1}}D_S(g(x))D_g(x)dK(x)=
\int_{\mathbb{S}}D_S(\zeta)d\sigma(\zeta).$$ Thus we have
proved (\ref{inequ}).
\end{proof}

\section{The proof of the main result}
First we prove the following  corollary of Lemma~\ref{onep1}
\begin{lemma}\label{onep}
Let $f$ be a $C^1$ homeomorphism between the spherical rings
$\A(r,R)$ and $\A(r_\ast,R_\ast)$ in $\mathbf{R}^3$, and let $S={f}/{|f|}$. Let
$\mathbb{S}(t)$  be the sphere centered at $0$ with the radius $t\in(r,R)$. Then
\begin{equation}\label{inequ1}\int_{\mathbb{S}(t)}\left(\norm DS\norm ^2-\left|DS(x)\frac{x}{|x|}\right|^2\right)d\sigma(x)\geq
8\pi. \end{equation} The inequality \eqref{inequ1} is sharp and is attained for the mappings of the form $f(x)=\rho(|x|) T\left(\frac{x}{|x|}\right)$, where $T$ is an arbitrary conformal transformation of the $2-$sphere $\mathbb{S}$.

\end{lemma}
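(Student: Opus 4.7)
The plan is to reduce the inequality to Lemma~\ref{onep1} via a pointwise estimate that compares the tangential Dirichlet energy density of $S$ on the sphere $\mathbb{S}(t)$ with its Jacobian $D_S$.

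First, at a point $x\in\mathbb{S}(t)$, I choose an orthonormal frame $(U_1,U_2,N)$ of $\mathbf{R}^3$ with $N=x/|x|$. Then $U_1,U_2$ span the tangent space $T_x\mathbb{S}(t)$, and the full Hilbert--Schmidt norm decomposes as
\[
\norm DS\norm^2=|DS(x)U_1|^2+|DS(x)U_2|^2+\left|DS(x)\tfrac{x}{|x|}\right|^2.
\]
Consequently the integrand in \eqref{inequ1} equals $|DS\,U_1|^2+|DS\,U_2|^2$, which is exactly (twice) the energy density of the restricted map $S|_{\mathbb{S}(t)}\colon \mathbb{S}(t)\to \mathbb{S}$. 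Note also that since $|S|\equiv 1$, the images $DS\,U_1$ and $DS\,U_2$ lie in $T_{S(x)}\mathbb{S}$.

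Next I use the pointwise inequality: for any two vectors $a,b\in\mathbf{R}^3$,
\[
|a|^2+|b|^2\ge 2|a||b|\ge 2|a\times b|,
\]
where the first step is AM--GM and the second follows from Lagrange's identity $|a\times b|^2=|a|^2|b|^2-\langle a,b\rangle^2$. Applying this with $a=DS\,U_1$, $b=DS\,U_2$ and recalling that $D_S(x)=|DS\,U_1\times DS\,U_2|$ gives
\[
\norm DS\norm^2-\left|DS(x)\tfrac{x}{|x|}\right|^2\ge 2\,D_S(x).
\]
Integrating over $\mathbb{S}(t)$ and invoking Lemma~\ref{onep1} with $n=3$, so that $\omega_{n-1}=4\pi$, yields the desired bound $8\pi$.

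For the sharpness claim I need to trace the equality cases. Equality in the pointwise estimate requires both $|DS\,U_1|=|DS\,U_2|$ and $\langle DS\,U_1,DS\,U_2\rangle=0$; i.e., $S|_{\mathbb{S}(t)}$ must be conformal at $x$. If $f(x)=\rho(|x|)T(x/|x|)$ with $T$ a conformal self-map of $\mathbb{S}$, then $S(x)=T(x/|x|)$ is the composition of the radial projection $\mathbb{S}(t)\to\mathbb{S}$ (a homothety) with the conformal map $T$, hence is conformal on every $\mathbb{S}(t)$; moreover it has degree one, so the inequality \eqref{inequ} of Lemma~\ref{onep1} is also saturated. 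Both steps becoming equalities yields $8\pi$. The only delicate point is to check that the bound \eqref{inequ} is indeed attained (not merely $\ge\omega_{n-1}$) for these maps; this is immediate because $S\circ g$ is a diffeomorphism onto $\mathbb{S}$ for such $f$, so $N(y,f)\equiv 1$ and the change-of-variables chain in the proof of Lemma~\ref{onep1} collapses to an equality.

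The main obstacle is conceptual rather than technical: recognizing that the left-hand side of \eqref{inequ1} is exactly the tangential Dirichlet energy of $S$ viewed as a map between two $2$-spheres, and that this is controlled from below by twice the Jacobian with equality precisely in the conformal case. Once this interpretation is in hand, the proof is a one-line application of AM--GM combined with Lemma~\ref{onep1}.
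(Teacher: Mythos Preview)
Your proof is correct and follows essentially the same route as the paper: choose an orthonormal frame $(U_1,U_2,N)$ with $N=x/|x|$, use the pointwise estimate $|DS\,U_1|^2+|DS\,U_2|^2\ge 2|DS\,U_1\times DS\,U_2|=2D_S(x)$, integrate, and apply Lemma~\ref{onep1}. The sharpness analysis via conformality of $S|_{\mathbb{S}(t)}$ and degree one also matches the paper's argument.
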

\begin{proof}

For fixed $x\in \A(r,R)$ let $N=\frac{x}{|x|}$ and assume that $U$, $V$ and $N$ is a system of mutually orthogonal vectors of the unit norm. Then $$\norm DS\norm^2 =|D_U S|^2+|D_V S|^2+|D_N S|^2$$ and so \begin{equation}\label{onep11}\norm DS\norm ^2-|D_N S|^2=|D_U S|^2+|D_V S|^2\ge 2|D_U S\times D_V S|=2 D_S. \end{equation} By integrating in $\mathbb{S}(t)$ and using Lemma~\ref{onep} we get \eqref{inequ1}.

Further if $T$ is a conformal mapping of $\mathbb{S}$ onto itself, and $f(x)=\rho(x)T\left(\frac{x}{|x|}\right)$ then the mapping $S:\mathbb{S}(t)\onto \mathbb{S}$ defined by $S(x)=T\left(\frac{x}{|x|}\right)$ is a conformal diffeomorphism between $\mathbb{S}(t)$ and $\mathbb{S}$. Moreover \begin{equation}\label{spli}\begin{split}2D_S(x)&=2|D_US(x)\times D_V S(x)|=|D_U S|^2+|D_V S|^2\\&=\norm DS\norm^2=\norm DS\norm^2-\left|DS(x)\frac{x}{|x|}\right|^2.\end{split}\end{equation} Thus $$\int_{\mathbb{S}(t)}\norm DS\norm ^2d\sigma(\eta)=8\pi.$$

\end{proof}

\begin{proof}[Proof of Theorem~\ref{krye}]
Before we go to the detailed proof let us make one shortcut. For every constant $a>0$ we have \begin{equation}\label{finv}\mathscr{F}\left[\frac{af}{|f|^2}\right]=\mathscr{F}[f].\end{equation}

In order to prove this statement, by calculations we find that for $g=\frac{af}{|f|^2}$ we have $$g_{x_i}=\frac{af_{x_i}}{|f|^2}-\frac{2af\left<f,f_{x_i}\right>}{|f|^4}, \ \ i=1,\dots, n.$$ Thus we obtain $$|g_{x_i}|^2=a^2\frac{|f_{x_i}|^2}{|f|^4},\ \ i=1,\dots, n.$$ Summing the previous inequalities we get $${\norm Dg\norm^2}=a^2\frac{\norm Df \norm^2}{|f|^4}.$$
It follows that $$\frac{\norm Df\norm^2}{|f|^2}=\frac{\norm Dg\norm^2}{|g|^2}.$$ This implies \eqref{finv}.

Thus  we can assume that $f$ maps the inner boundary onto the inner boundary and the  outer boundary onto the outer boundary, that means the following: $$\lim_{|x|\to r} |f(x)|=r_\ast$$ and $$\lim_{|x|\to R} |f(x)|=R_\ast.$$   By \eqref{normq} and Fubini's theorem we have
\[\begin{split}\mathscr{F}[f]&=\int_{\A(r,R)}\left(\frac{|\nabla \rho|^2}{\rho^2} + \norm D S\norm ^2\right)dx\\&=\int_r^R dt \int_{\mathbb{S}(t)}\left(\frac{|\nabla \rho|^2}{\rho^2} + \norm D S\norm ^2 \right)d\sigma(\eta)\end{split}\]
For fixed $\eta$, consider the curve $$\alpha(t)=f(t\eta)=\rho(t\eta)S(t\eta).$$

Then we have $$|\alpha'(t)|^2=|f'(t\eta)\eta|^2$$ and $|\alpha(r)|=r_\ast$ and $|\alpha(R)|=R_\ast$.

So $$|\alpha'(t)|^2=\left<\nabla \rho (t\eta) , \eta \right>^2+\rho^2(t\eta) |DS(t\eta) \eta|^2$$

Moreover \begin{equation}\label{pr}|\nabla \rho|^2\ge \left<\nabla \rho (t\eta) , \eta \right>^2=|\alpha'(t)|^2-\rho^2(t\eta) |DS(t\eta) \eta|^2\end{equation}

So \begin{equation}\begin{split}A&\ge 4\pi \int_r^R t^2 dt \frac{|\alpha'(t)|^2}{\alpha^2(t)} dt+\int_r^R t^2\int_{\mathbb{S}}\norm DS(t\eta)\norm ^2-|DS(t\eta) \eta|^2d\sigma(\eta)\\&=4\pi \int_r^R t^2 dt \frac{|\alpha'(t)|^2}{\alpha^2(t)} dt+\int_r^R\int_{\mathbb{S}(t)}\left(\norm DS(\zeta)\norm ^2-\left|DS(\zeta) \frac{\zeta}{t}\right|^2\right) d\sigma(\zeta)\end{split} \end{equation}

Further from \eqref{onep11} we have  \begin{equation}\label{duep}\int_{\mathbb{S}(t)}\left(\norm DS(\zeta)\norm ^2-\left|DS(\zeta) \frac{\zeta}{t}\right|^2\right) d\sigma(\zeta)\ge 8\pi. \end{equation}
Therefore \begin{equation}\label{threep}A\ge 4\pi \int_r^R \left(t^2\frac{|\alpha'(t)|^2}{|\alpha(t)|^2} +2\right)dt\ge \int_r^R \left(t^2\frac{(|\alpha(t)|')^2}{|\alpha(t)|^2} +2\right)dt.\end{equation}
If $f(x)=H(t)T(\frac{x}{|x|})$ then in view of \eqref{alp} and \eqref{spli} we have
$$\mathscr{F}[f]= \mathscr{H}[H]=4\pi\int_r^R \left(\frac{t^2\dot H^2}{H^2}+2\right) dt= 4\pi \int_r^R \left(t^2\frac{(|\alpha(t)|')^2}{|\alpha(t)|^2} +2\right)dt,$$ where $\alpha(t)=\rho(t\eta)\eta$, and $\eta$ is any fixed vector.
The Euler-Lagrange equation for the energy integral $\mathscr{H}$, as in \eqref{bubi} reduces to
\begin{equation}\label{sala} 2 \dot H(t) {H(t)}-{t \dot H(t)^2}+t \ddot H(t) {H(t)}=0.\end{equation} By taking the substitution $H(t)=\exp(K(t))$ in \eqref{sala} we arrive to the differential equation $$e^{K(t)} \left(2 \dot K(t)+t \ddot K(t)\right)=0 $$ whose general solution is $$K(t)=c_1+\frac{c_2}{t}.$$ Thus the general solution of \eqref{sala} is $$H(t)=ae^{b/t}, \ \   a>0,  \ \  b\in \mathbf{R}.$$
The diffeomorphisms $$H_1(t)=r_\ast \left(\frac{R_\ast }{r_\ast }\right)^{\frac{R (t-r)}{(R-r) t}}$$
and
$$H_2(t)=\frac{R_\ast r_\ast} {H_1(t)}={R_\ast} \left(\frac{r_\ast }{R_\ast }\right)^{\frac{R (r-t)}{(R-r) t}}$$
 map the interval $[r,R]$ onto $[r_\ast,R_\ast]$. The mapping $H_1$ preserves the orientation, and $H_2$ changes the orientation.
The  energy of this stationary mappings is $$\mathscr{F}[H_1]=\mathscr{F}[H_2]=4 \pi  \left(2(R-r)+\frac{r R \log \left[\frac{R_\ast}{r_\ast }\right]^2}{R-r}\right).$$

To prove that they are minimizers, we need to show that, we only need to
show that the given energy integral $$\mathscr{H}[H]=4\pi\int_r^R \left(\frac{t^2\dot H^2}{H^2}+2\right) dt$$ attains its minimum.

Define   $$\Lambda(t,H,\dot H)=\left(\frac{t^2\dot H^2}{H^2}+2\right),$$
and show that it is convex in $K=\dot H$.
For $K=\dot H$ we have the following formula
 \[\begin{split}\partial_{KK}\Lambda[t,H,K]=\frac{2t^2}{H^2}\end{split}\]
 which is clearly positive. Further since $r\le t\le R$ and $r_\ast\le H(t)\le R_\ast$, we can find a positive constant $C$ so that
\begin{equation}\label{bound1}C|\dot H|^2\le \Lambda[s,H,\dot H],\end{equation} which implies that the function $L$ is coercive.

  Let $H_m=H_m(t):[r.R]\to[r_\ast,R_\ast]$ be a sequence of smooth bijections  with $H_m(r)=r_\ast$, $H_m(R)=R_\ast$ and $$\inf_{H:[r.R]\to[r_\ast,R_\ast]}
  \mathscr{H}[H]=\lim_{m\to \infty} \mathscr{H}[H_m].$$ Then up to a subsequence it
   converges to a monotone increasing function $H_\circ$.  Moreover,  since $H_m$ is a bounded sequence of $\mathscr{W}^{1,2}$, it converges,
   up to a subsequence weakly to a mapping $H_\circ \in \mathscr{W}^{1,2}$.

 By using the convexity of $\mathcal{L}$  and the fact that $\mathcal{L}$ is coercive, by standard theorem from the calculus of
 variation (see \cite[p.~79]{bern}),  we obtain that $$ \mathscr{H}[H_\circ]= \lim_{m\to \infty}\ \mathscr{H}[H_m].$$

 Further as $\mathcal{L}[s,H,K]\in C^\infty(\mathbf{R}_+^3)$, with $\partial^2_{KK}\mathcal{L}[s,H,K]>0$, we infer that
 $H_\circ\in C^\infty[r,R]$ (see \cite[p.~17]{jost}) and $H_\circ$ is the solution of our Euler-Lagrange equation. Thus it coincides with $H_1$ or $H_2$.

To prove the equali statement, assume that in all inequalities \eqref{inequ1},\eqref{duep},\eqref{threep} is attained the equality. If \eqref{threep} is an equality, then $$\left<\alpha'(t),\frac{\alpha(t)}{|\alpha(t)|}\right> =|\alpha'(t)|$$ for every $t$. This implies that $\alpha'(t)=\varrho(t)\alpha(t)$ for $\varrho(t)>0$. Thus if $\alpha(t)=(x(t),y(t),z(t))$ we obtain $x(t)=c_1\exp(\int_r^t \varrho(t)dt)$, $y(t)=c_2\exp(\int_r^t \varrho(t)dt)$, $z=c_3\exp(\int_r^t \varrho(t)dt)$. In other words $\alpha$ is the part of the line $$\frac{x}{c_1}=\frac{y}{c_2}=\frac{z}{c_3},$$ orthogonal to the spheres that connect two points from the sphere. This means that if $\alpha(t)=\rho(t\eta)S(t\eta)$, then $S(t\eta)=S(\eta)$. In particular $D_N S=0$. If the equality is attained in  \eqref{inequ1}, then it is attained in \eqref{onep11}. Therefore $S_U \bot S_V$ that means the mapping $S_t(t\eta)=S(\eta)$ conformally maps $\mathbb{S}(t)$ onto $\mathbb{S}$ and does not depend on $t.$ Here the vectors $U$ and $V$ are mutually orthogonal and of unit norm (as in Lemma~\ref{onep}). If the equality is attained in \eqref{pr}, we get $$|\nabla \rho (x)|=|D_N \rho(x)|,$$ thus $D_U \rho(x)=0$ and $D_V \rho(x)=0$ which implies that $\rho(x)=\rho(|x|)$ (by abusing the notation). Thus we have proved that $u(x)=\rho(|x|)T\left(\frac{x}{|x|}\right)$ where $T$ is a conformal mapping of $\mathbb{S}$ onto itself.

\end{proof}

\section{Appendix}

It follows from Theorem~\ref{krye} that
\begin{corollary}\label{rrje}
Let $f\in \mathscr{W}^{1,2}$ be a homeomorphism between $\A(r,R)$ and $\A(r_\ast, R_\ast)$. Then \begin{equation}\label{notsh}\mathscr{E}[f]=\int_{\A(r,R)}\norm Df\norm^2 dx\ge \frac{4 \pi  }{R_\ast^2}\left(2(R-r)+\frac{r R \log \left[\frac{R_\ast}{r_\ast }\right]^2}{R-r}\right).\end{equation}
\end{corollary}
It seems that \eqref{notsh} is not sharp, but it shows that the minimizer of Dirichlet energy is not zero for the case of non-degenerated annuli. This is somehow complementary result to result for the case of degenerated annuli, where the infimum of the Dirichlet energy of Sobolev homomorphisms with free boundary condition is zero (\cite[Theorem~1.6]{advc}). It should be noticed the following, the solution to the equation $\Delta h=0$, if $h(x)=H(r)\frac{x}{|x|}$, according to \eqref{delta} is given by $$H(t)=ar +\frac{b}{t^2}.$$ Now the solution to the boundary value problem $$\left\{
                                                                                                       \begin{array}{ll}
                                                                                                         \Delta h=0, & \hbox{if $h=H(|x|)\frac{x}{|x|}$;} \\
                                                                                                         H(r)=r_\ast, H(R)=R_\ast, & \hbox{where $0<r<R$ and $0<r_\ast<R_\ast$,}
                                                                                                       \end{array}
                                                                                                     \right.$$ is given by $$H(t)=\frac{r^2 R^2 (-R r_\ast+r R_\ast)}{\left(r^3-R^3\right) t^2}+\frac{\left(r^2 r_\ast-R^2 R_\ast\right) t}{r^3-R^3}.$$
Then $$H'(t)=\frac{r^2 r_\ast-R^2 R_\ast}{r^3-R^3}+\frac{2 \left(r^2 R^3 r_\ast-r^3 R^2 R_\ast\right)}{\left(r^3-R^3\right) t^3}.$$
So $H'(t)>0$ for $t\in[r,R]$ if and only if $$\left(-2 r^2 R^3 r_\ast+2 r^3 R^2 R_\ast\right)+\left(-r^2 r_\ast+R^2 R_\ast\right) t^3\ge 0, \ \  t\in[r,R].$$
It follows that $$r^3 r_\ast+2 R^3 r_\ast-3 r R^2 R_\ast\le 0$$ i.e. the condition \begin{equation}\label{nit}\frac{r_\ast}{R_\ast}\le \frac{3 r R^2 }{r^3+2R^3},\end{equation} is sufficient and necessary for existence of radial Euclidean harmonic mappings between given annuli (the so-called generalized Nitsche condition).
In this case the harmonic mapping $h(x)=H(r)\frac{x}{|x|}$ satisfies the equation \begin{equation}\label{diri}\mathscr{E}[h]=\frac{4 \pi  \left(r \left(r^3+2 R^3\right) r_\ast^2-6 r^2 R^2 r_\ast R_\ast+R \left(2 r^3+R^3\right) R_\ast^2\right)}{R^3-r^3}.\end{equation}
It is clear that the quantity $X$ on the right hand side of \eqref{diri} is bigger than the quantity $Y$ on the right hand side of \eqref{notsh}. It is also clear that, $$Z=\inf\{\mathscr{E}[h]:h\in\mathscr{W}^{1,2}(\A(r,R),\A(r_\ast, R_\ast))\}\in (Y,X],$$ and probably $Z<X$, in view of (\cite[Theorem~1.6]{advc}), but the right value of $Z$ remains so far un-known.

\begin{conjecture}
Assume that $\mathcal{F}$ is the family a homeomorphisms between spherical rings $\A(r,R)$ and $\A(r_\ast, R_\ast)$ in $\mathbf{R}^n$ that belongs to $\mathscr{W}^{1,n-1}$. Then the Dirichlet integral of $f\in\mathcal{F}$ with respect to the weight $\wp(y)=|y|^{1-n}$ i.e. the integral $$\mathscr{F}[f]=\int_{\A(r,R)}\frac{\norm Df\norm ^{n-1}}{|f|^{n-1}} dx,$$   achieves its minimum for generalised-radial difeomorphisms between annuli.
\end{conjecture}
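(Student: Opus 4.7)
The plan is to mirror the proof of Theorem~\ref{krye}. First, the inversion invariance $\mathscr{F}[af/|f|^2]=\mathscr{F}[f]$ of \eqref{finv} extends to the present setting by the same direct computation: for $g=af/|f|^2$, the identities $\norm Dg\norm^2=a^2\norm Df\norm^2/|f|^4$ and $|g|^{1-n}=a^{1-n}|f|^{n-1}$ combine to give $\wp(g)\norm Dg\norm^{n-1}=\wp(f)\norm Df\norm^{n-1}$. Hence one may assume $f$ sends the inner (resp.\ outer) boundary of $\A(r,R)$ to the inner (resp.\ outer) boundary of $\A(r_\ast,R_\ast)$. Writing $f=\rho S$ with $|S|=1$, identity \eqref{normq} gives $\norm Df\norm^2/|f|^2=|\nabla\log\rho|^2+\norm DS\norm^2$.

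Second, I split the integrand into radial and tangential contributions. Fix the orthonormal frame $(N=x/|x|,U_1,\ldots,U_{n-1})$ and, for each $\eta\in\mathbb{S}$, the curve $\alpha(t)=f(t\eta)$. The identities $|\alpha'(t)|^2=\langle\nabla\rho,\eta\rangle^2+\rho^2|DS\eta|^2$ and $|Df\,U_i|^2\ge\rho^2|D_{U_i}S|^2$ yield
\[
\frac{\norm Df\norm^2}{|f|^2}\ge\frac{|\alpha'(t)|^2}{|\alpha(t)|^2}+\sum_{i=1}^{n-1}|D_{U_i}S|^2,
\]
and the elementary inequality $(x+y)^{(n-1)/2}\ge x^{(n-1)/2}+y^{(n-1)/2}$ (valid for $(n-1)/2\ge 1$) gives the pointwise split
\[
\frac{\norm Df\norm^{n-1}}{|f|^{n-1}}\ge\frac{|\alpha'(t)|^{n-1}}{|\alpha(t)|^{n-1}}+\Bigl(\sum_{i=1}^{n-1}|D_{U_i}S|^2\Bigr)^{(n-1)/2}.
\]
AM--GM gives $\bigl(\sum|D_{U_i}S|^2\bigr)^{(n-1)/2}\ge(n-1)^{(n-1)/2}D_S$ with $D_S$ the Gram determinant of $S|_{\mathbb{S}(t)}$, and the $\mathbf R^n$ analogue of Lemma~\ref{onep1} yields $\int_{\mathbb{S}(t)}D_S\,d\sigma_t\ge\omega_{n-1}$. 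Fubini then bounds the tangential contribution below by $(n-1)^{(n-1)/2}\omega_{n-1}(R-r)$.

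For the radial contribution, $|\alpha'(t)|\ge|(|\alpha(t)|)'|$ reduces the problem, for each fixed $\eta$, to the one-dimensional minimisation
\[
J[H]=\int_r^R t^{n-1}(\dot H/H)^{n-1}\,dt,\qquad H(r)=r_\ast,\ H(R)=R_\ast.
\]
Substituting $K=\log H$ converts $J$ to $\int t^{n-1}\dot K^{n-1}\,dt$, whose Euler--Lagrange equation $t^{n-1}\dot K^{n-2}=\mathrm{const}$ integrates to $K(t)=A+B\,t^{-1/(n-2)}$, so
\[
H^\star(t)=e^{A}\exp\bigl(B\,t^{-1/(n-2)}\bigr),
\]
recovering $H_1,H_2$ of Theorem~\ref{krye} when $n=3$. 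Strict convexity in $\dot K$ and coercivity of the Lagrangian give existence, uniqueness and regularity of this 1D minimizer in $\mathscr{W}^{1,n-1}$ by the same variational argument as for $n=3$. Combining, $\mathscr{F}[f]\ge\omega_{n-1}J[H^\star]+(n-1)^{(n-1)/2}\omega_{n-1}(R-r)$ for every admissible $f$.

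The principal obstacle is attainability for $n>3$. Since $(x+y)^{(n-1)/2}>x^{(n-1)/2}+y^{(n-1)/2}$ strictly whenever $x,y>0$, no single diffeomorphism $f=H^\star\cdot T$ with $T$ a fixed conformal transformation of $\mathbb{S}$ attains the lower bound when $n>3$. The expected resolution is that the bound is attained only as a \emph{limit}: choose a sequence of Möbius $T_k\colon\mathbb{S}\to\mathbb{S}$ whose conformal factors $\lambda_k$ concentrate; the Jacobian identity forces $\int_{\mathbb{S}}\lambda_k^{n-1}\,d\omega=\omega_{n-1}$ throughout, while Hölder's inequality combined with the concentration drives $\int_{\mathbb{S}}\lambda_k^{2}\,d\omega\to 0$. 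A dominated-convergence/concentration-compactness computation of $\mathscr{F}[H^\star\cdot T_k]$ with integrand $(A+B\lambda_k^2)^{(n-1)/2}$, where $A=\dot H^{\star 2}/H^{\star 2}$ and $B=(n-1)/t^2$, gives $\mathscr{F}[H^\star T_k]\to\omega_{n-1}J[H^\star]+(n-1)^{(n-1)/2}\omega_{n-1}(R-r)$, matching the lower bound. Thus the infimum in the conjecture is correct and is approached by generalised-radial diffeomorphisms; the real difficulty is justifying this attainability rigorously and reconciling it with the literal wording ``achieves its minimum'', which for $n=3$ is automatic (every $T$ yields a genuine minimizer since $(n-1)/2=1$), but for $n>3$ must be interpreted in the infimum sense.
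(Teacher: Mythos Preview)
The statement you are addressing is a \emph{conjecture}: the paper offers no proof of it, so there is nothing to compare your argument against. What can be assessed is whether your proposal actually settles the conjecture.

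Your lower bound is the natural $n$-dimensional extension of the proof of Theorem~\ref{krye}, and the chain of inequalities (splitting $\norm Df\norm^2/|f|^2$ via \eqref{normq}, applying $(x+y)^{(n-1)/2}\ge x^{(n-1)/2}+y^{(n-1)/2}$, then AM--GM and Lemma~\ref{onep1}) is sound. However, as you yourself observe, for $n>3$ the super-additivity inequality is \emph{strict} whenever both summands are positive; since for any homeomorphism $f\in\mathcal F$ both the radial term $|\alpha'|^2/|\alpha|^2$ and the tangential term $\sum_i|D_{U_i}S|^2$ are positive on a set of full measure, your lower bound is \emph{never} attained. Your concentration heuristic with degenerating M\"obius maps $T_k$, if made rigorous, would then show that the infimum over $\mathcal F$ equals your bound and is approached along generalised-radial maps but is \emph{not achieved} by any element of $\mathcal F$. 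That is the opposite of the conjecture's literal assertion ``achieves its minimum''. So rather than a proof, what you have outlined is evidence that the conjecture, read literally, fails for $n>3$ and must be reinterpreted as a statement about infima.

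Two further gaps deserve mention. First, the concentration step is only formal: passing to the limit in $\int_{\mathbb S}(A+B\lambda_k^2)^{(n-1)/2}\,d\omega$ requires a genuine argument separating the bulk (where $\lambda_k\to 0$) from the concentration set (carrying the full mass of $\lambda_k^{n-1}$), together with control of the cross terms; dominated convergence alone does not suffice. Second, for even $n$ the one-dimensional integrand $t^{n-1}\dot K^{\,n-1}$ is an odd power of $\dot K$ and is neither convex nor coercive; the variational argument must be run with $|\dot K|^{\,n-1}$, and the Euler--Lagrange derivation adjusted accordingly.
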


\end{document}